\documentclass[11pt]{article}

\usepackage[T1]{fontenc}
\usepackage[utf8]{inputenc}
\usepackage{lmodern}
\usepackage{microtype}
\usepackage{amsmath,amssymb,amsthm,mathtools}
\usepackage{enumitem}
\usepackage[a4paper,margin=1.08in]{geometry}
\usepackage[hidelinks]{hyperref}
\usepackage[nameinlink,noabbrev]{cleveref}
\usepackage{booktabs}

\setlist{nosep,leftmargin=2em}

\newtheorem{theorem}{Theorem}[section]
\newtheorem{proposition}[theorem]{Proposition}
\newtheorem{corollary}[theorem]{Corollary}

\theoremstyle{definition}
\newtheorem{definition}[theorem]{Definition}
\newtheorem{problem}[theorem]{Problem}
\theoremstyle{remark}

\newcommand{\DT}{\mathcal D_T}
\newcommand{\Idl}{\operatorname{Idl}}
\newcommand{\Arith}{\operatorname{ARITH}}

\newcommand{\cl}{\operatorname{cl}}
\newcommand{\lfp}{\operatorname{lfp}}
\newcommand{\down}{\mathord{\downarrow}}
\newcommand{\join}{\mathbin{\oplus}}
\newcommand{\bigjoin}{\bigoplus}
\newcommand{\leT}{\leq_T}
\newcommand{\ltT}{<_T}
\newcommand{\equivT}{\equiv_T}

\title{\textbf{Jump Closure and Limit Uniformization\\in the Ideal Completion of the Turing Degrees}}
\author{Miara Sung\thanks{jbaek080@berkeley.edu}}
\date{August 2026}

\begin{document}
\maketitle

\begin{abstract}
The Turing jump has no fixed point on the Turing degrees: $\mathbf a <_T \mathbf a'$ for every degree $\mathbf a$. After passing to the ideal completion, however, a natural fixed-point phenomenon appears. We study the Scott-continuous lifting
$\Gamma:\operatorname{Idl}(\mathbf D_T)\to\operatorname{Idl}(\mathbf D_T)$,
given by
$\Gamma(I)=\downarrow\{\mathbf a':\mathbf a\in I\}$.
Starting from the computable degree, Kleene iteration reaches its first fixed point at stage $\omega$, namely the Turing ideal of arithmetical degrees; more generally, above $\mathbf a$ the least fixed point is the ideal of degrees arithmetical in $\mathbf a$.

To pass beyond this fixed point, we introduce a limit-uniformization operator. Although the ideal of finite jumps contains every $\mathbf 0^{(n)}$, it does not contain the uniform limit oracle
$\mathbf 0^{(\omega)}=\deg_T\!\left(\bigoplus_{n < \omega}0^{(n)}\right)$.
The uniformization operator adjoins this oracle only when all finite jump degrees are present. It is monotone but not Scott-continuous. Composing jump closure with one such gate yields closure ordinal $\omega\cdot 2$; gates at $\omega,2\omega,3\omega,\ldots$ yield closure ordinal $\omega^2$.

Thus non-uniform closure under relativized halting problems is Scott-continuous and reaches fixed ideals, while uniform coding of an entire prior hierarchy is infinitary, discontinuous, and reopens diagonalization. This gives a domain-theoretic semantics for the successor/limit distinction in transfinite Turing-jump hierarchies and links failures of Scott continuity with closure ordinals.
\end{abstract}

\noindent\textbf{Keywords.} Turing jump; Turing degrees; halting problem; domain theory; Scott continuity; Turing ideals; closure ordinals; hyperarithmetic hierarchy; ordinal analysis.

\medskip
\noindent\textbf{MSC 2020.} 03D28, 03D55, 06B35.

\section{Introduction}

The Turing jump is the canonical operation for producing a stronger degree of unsolvability from a given oracle. If $A\subseteq\omega$, its jump
\[
A'=\{e:\Phi_e^A(e)\downarrow\}
\]
is the halting problem relative to $A$. Its degree depends only on the Turing degree of $A$, and the induced operation
\[
\mathbf a\longmapsto\mathbf a'
\]
is monotone and strictly increasing:
\[
\mathbf a\ltT\mathbf a'.
\]
Consequently, the jump has no fixed point on the Turing degrees. This strictness is the degree-theoretic form of the recurrence of the halting diagonal: once an oracle solves one level of halting problems, machines using that oracle generate a new halting problem one jump higher.

Classical computability theory responds by iteration. The finite sequence
\[
\mathbf0,\mathbf0',\mathbf0'',\ldots
\]
organizes the finite arithmetical hierarchy. Effective transfinite iteration along recursive well-orders produces the hyperarithmetic hierarchy. At a recursive limit ordinal $\lambda$, one does not take another ordinary successor jump; instead one forms an effective join coding the preceding stages into a single oracle. The basic example is
\[
0^{(\omega)}=\bigjoin_{n<\omega}0^{(n)}.
\]
Turing's ordinal logics initiated this use of oracle computation in transfinite progressions, while the later work of Kleene and Spector supplied the machinery by which iterated jumps are organized through recursive ordinal notations \cite{Turing1939,Spector1955,Slaman2005}.

The purpose of this paper is to reinterpret the successor/limit structure as a fixed-point phenomenon in domain theory. The central observation is that the absence of fixed points for the jump depends on the \emph{type of object} to which the jump is applied. An individual degree can never contain its own jump. A collection of degrees, by contrast, can be closed under taking jumps. Turing ideals closed under the jump are classical objects, and they occur naturally as the second-order parts of $\omega$-models of arithmetical comprehension \cite{Simpson2009}.

We exploit the ideal completion of the Turing degrees. Ideal completion is a standard construction in domain theory: directed lower sets are treated as completed informational states, while principal ideals embed the original poset as compact approximants \cite{AbramskyJung1994,Scott1972}. Every monotone self-map of a poset has a canonical Scott-continuous extension to its ideal completion. Applied to the Turing jump, this yields
\[
\Gamma(I)=\down\{\mathbf a':\mathbf a\in I\}.
\]
Although the original jump has no fixed degree, $\Gamma$ has a least fixed point. Starting from the computable degree, the iteration is
\[
\down\mathbf0
\subsetneq
\down\mathbf0'
\subsetneq
\down\mathbf0''
\subsetneq\cdots,
\]
and its first limit is
\[
\bigcup_{n<\omega}\down\mathbf0^{(n)}.
\]
By Post's theorem, this is precisely the ideal of arithmetical degrees. It is already closed under the jump. The closure ordinal is exactly $\omega$.

This supplies a precise sense in which a ``Turing-jump fixed point'' exists:
\[
\text{no degree satisfies }\mathbf a'=\mathbf a,
\]
but
\[
\text{a domain of degrees can satisfy }\Gamma(I)=I.
\]
The fixed point does not solve diagonalization by producing a universal oracle. It contains, non-uniformly, a solver for every finite level of the hierarchy.

That qualification is essential. The arithmetical ideal contains each $\mathbf0^{(n)}$, but it does not contain $\mathbf0^{(\omega)}$. The latter packages all finite jumps into one oracle. Once this package is admitted, its own relativized halting problem
\[
0^{(\omega+1)}=(0^{(\omega)})'
\]
reopens the jump hierarchy.

Our second aim is to isolate this operation order-theoretically. We call it \emph{limit uniformization}. Given an increasing sequence of degrees and a degree uniformly coding that sequence, the associated operator waits until an ideal contains every member of the sequence and only then adjoins the uniform code. The requirement that \emph{all} stages be present has no finite witness. Correspondingly, the operator is monotone but fails Scott continuity.

The distinction can therefore be summarized as
\[
\boxed{\text{jump closure is Scott-continuous, whereas limit uniformization is not.}}
\]
This gives a domain-theoretic interpretation of the successor/limit distinction in transfinite jump iteration. At a successor stage one forms a relativized halting problem. At a limit stage one uniformly packages a cofinal family of prior stages. The former extends continuously to ideals; the latter detects completion of an infinite directed chain and therefore fails to commute with its supremum.

Combining these operations gives transfinite closure ordinals. One uniformization gate at $\omega$ yields closure ordinal $\omega\cdot2$. Gates at
\[
\omega,2\omega,3\omega,\ldots
\]
yield closure ordinal $\omega^2$.

The individual ingredients used here are classical: ideal completion, jump ideals, uniform upper bounds, and transfinite jump hierarchies all have established literatures \cite{Hodes1982,Slaman2005}. The point of the present construction is the semantic decomposition: non-uniform jump closure reaches a Scott-continuous fixed point, whereas uniformization of an infinite hierarchy is a discontinuous escape operation that restarts the diagonal process.

\section{Turing degrees and ideal completion}

\subsection{Turing degrees}

Let $\DT$ denote the partially ordered set of Turing degrees. We write
$\mathbf a\leT\mathbf b$ when every representative of $\mathbf a$ is Turing reducible to a representative of $\mathbf b$. The least degree is denoted $\mathbf0$. The join is
\[
\mathbf a\vee\mathbf b=\deg_T(A\join B).
\]

The Turing jump is the map
\[
J:\DT\to\DT,
\qquad
J(\mathbf a)=\mathbf a'.
\]
It is monotone,
\[
\mathbf a\leT\mathbf b
\quad\Longrightarrow\quad
\mathbf a'\leT\mathbf b',
\]
and strictly progressive,
\begin{equation}
\mathbf a\ltT\mathbf a'.
\label{eq:jump-strict}
\end{equation}
The strictness is the relativized undecidability of the halting problem. See, for example, \cite{KleenePost1954,ShoreSlaman1999}.

We write
\[
\mathbf a^{(0)}=\mathbf a,
\qquad
\mathbf a^{(n+1)}=(\mathbf a^{(n)})'.
\]

\subsection{Turing ideals}

\begin{definition}
A \emph{Turing ideal} is a nonempty set $I\subseteq\DT$ such that
\begin{enumerate}
\item if $\mathbf b\leT\mathbf a\in I$, then $\mathbf b\in I$;
\item if $\mathbf a,\mathbf b\in I$, then $\mathbf a\vee\mathbf b\in I$.
\end{enumerate}
\end{definition}

Equivalently, since $\DT$ is an upper semilattice, Turing ideals are directed lower subsets of $\DT$. Let
\[
\Idl(\DT)
\]
denote their poset under inclusion. For a degree $\mathbf a$, write
\[
\down\mathbf a=\{\mathbf b:\mathbf b\leT\mathbf a\}.
\]
The map
\[
\eta:\DT\to\Idl(\DT),
\qquad
\eta(\mathbf a)=\down\mathbf a
\]
embeds degrees as principal ideals.

The ideal completion is a dcpo. Directed suprema are unions:
\[
\bigvee_{i\in D}I_i=\bigcup_{i\in D}I_i
\]
for every directed family $(I_i)_{i\in D}$. In the usual domain-theoretic interpretation, the principal ideals are the finitely generated approximants and arbitrary ideals are their directed completions \cite{AbramskyJung1994}.

\subsection{Canonical lifting to the ideal completion}

We first record the general order-theoretic fact on which the construction rests.

\begin{proposition}[Ideal-completion extension]
\label{prop:ideal-extension}
Let $P$ be a poset and let $f:P\to P$ be monotone. Define
\[
\widehat f:\Idl(P)\to\Idl(P)
\]
by
\begin{equation}
\widehat f(I)
=\down f[I]
=\{y\in P:\exists x\in I\ (y\leq f(x))\}.
\label{eq:ideal-extension}
\end{equation}
Then $\widehat f$ is well defined and Scott-continuous. Moreover,
\begin{equation}
\widehat f(\down x)=\down f(x)
\label{eq:principal-extension}
\end{equation}
for every $x\in P$.
\end{proposition}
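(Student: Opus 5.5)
The plan is to verify the three claims directly from the definitions: that $\widehat f(I)$ is an ideal, that $\widehat f$ is Scott-continuous, and that it agrees with $f$ on principal ideals. Throughout, an ideal of $P$ means a nonempty directed lower subset, as in the definition of $\Idl(\DT)$.

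\emph{Well-definedness.} Let $I$ be an ideal. Then $\widehat f(I)=\down f[I]$ is a lower set by construction, and it is nonempty because $I$ is nonempty. For directedness, take $y_1\le f(x_1)$ and $y_2\le f(x_2)$ with $x_1,x_2\in I$. Since $I$ is directed, there is $x\in I$ with $x_1,x_2\le x$. Monotonicity of $f$ gives $f(x_1),f(x_2)\le f(x)$, so $f(x)\in\widehat f(I)$ is a common upper bound of $y_1$ and $y_2$. This is the one place where monotonicity is essential.

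\emph{Monotonicity of $\widehat f$.} If $I\subseteq K$, then $f[I]\subseteq f[K]$, so $\down f[I]\subseteq\down f[K]$.

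\emph{Preservation of directed suprema.} Let $(I_i)_{i\in D}$ be a directed family of ideals. Directed suprema in $\Idl(P)$ are unions, as recorded above, so the supremum is $\bigcup_i I_i$. I would first check that this union is itself an ideal: any two elements lie in $I_i$ and $I_j$, both of which are contained in some $I_k$, and $I_k$ then supplies an upper bound. Next, $y\in\widehat f(\bigcup_i I_i)$ holds iff $y\le f(x)$ for some $x$ in some $I_i$, which holds iff $y\in\bigcup_i\widehat f(I_i)$. Monotonicity makes the family $(\widehat f(I_i))_i$ directed, so this union is its supremum. Hence
\[
\widehat f\Bigl(\bigcup_i I_i\Bigr)=\bigcup_i\widehat f(I_i).
\]

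\emph{Principal ideals.} If $y\le f(x')$ with $x'\le x$, then $y\le f(x)$ by monotonicity, so $\down f[\down x]\subseteq\down f(x)$. Conversely, $f(x)\in f[\down x]$, so $\down f(x)\subseteq\down f[\down x]$. Thus $\widehat f(\down x)=\down f(x)$.

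No step is a genuine obstacle; this is a routine argument. The only point needing care is the directedness of $\widehat f(I)$, where monotonicity of $f$ is used, together with the fact that directed unions of ideals are again ideals.
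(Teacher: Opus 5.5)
Your proof is correct and follows the paper's argument essentially step for step. Both show directedness of $\widehat f(I)=\down f[I]$ via a common upper bound $x\in I$ and monotonicity of $f$, show preservation of directed unions by the same set-theoretic rewriting, and prove the two inclusions for principal ideals; your additional checks (nonemptiness of $\widehat f(I)$, monotonicity of $\widehat f$, and that a directed union of ideals is an ideal) are points the paper leaves implicit, so including them makes the argument more complete.
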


\begin{proof}
The set $\widehat f(I)$ is lower by definition. Suppose
$y_0\leq f(x_0)$ and $y_1\leq f(x_1)$ with $x_0,x_1\in I$. Since $I$ is directed, there is $x\in I$ with $x_0,x_1\leq x$. Monotonicity gives $f(x_0),f(x_1)\leq f(x)$, so $f(x)$ is an upper bound in $\widehat f(I)$ for $y_0,y_1$. Thus $\widehat f(I)$ is an ideal.

Let $(I_i)_{i\in D}$ be directed. Then
\begin{align*}
\widehat f\!\left(\bigcup_iI_i\right)
&=\{y:\exists x\in\bigcup_iI_i\ (y\leq f(x))\}\\
&=\bigcup_i\{y:\exists x\in I_i\ (y\leq f(x))\}\\
&=\bigcup_i\widehat f(I_i).
\end{align*}
Hence $\widehat f$ preserves directed suprema.

Finally, if $y\in\widehat f(\down x)$, then $y\leq f(z)$ for some $z\leq x$, and hence $y\leq f(x)$. Conversely, $f(x)$ is among the generators, which proves \eqref{eq:principal-extension}.
\end{proof}

The proposition exhibits the type shift that drives the paper: a progressive operation may fail to possess a fixed point among principal elements while its Scott-continuous extension possesses a nonprincipal fixed point in the completion.

\section{The jump-closure fixed point}

Apply \cref{prop:ideal-extension} to the Turing jump.

\begin{definition}
Define
\[
\Gamma:\Idl(\DT)\longrightarrow\Idl(\DT)
\]
by
\begin{equation}
\Gamma(I)=\down\{\mathbf a':\mathbf a\in I\}.
\label{eq:Gamma}
\end{equation}
\end{definition}

By \cref{prop:ideal-extension}, $\Gamma$ is Scott-continuous and
\begin{equation}
\Gamma(\down\mathbf a)=\down\mathbf a'.
\label{eq:Gamma-principal}
\end{equation}
Since $\mathbf a\leT\mathbf a'$ for every degree $\mathbf a$, we also have
\begin{equation}
I\subseteq\Gamma(I).
\label{eq:Gamma-inflationary}
\end{equation}
Thus $\Gamma$ is inflationary.

\begin{proposition}
\label{prop:jump-ideal}
For a Turing ideal $I$, the following are equivalent:
\begin{enumerate}
\item $\Gamma(I)=I$;
\item $I$ is closed under the Turing jump.
\end{enumerate}
\end{proposition}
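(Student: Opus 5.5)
The plan is to prove the two implications directly from the definition \eqref{eq:Gamma} of $\Gamma$, using the inflationary property \eqref{eq:Gamma-inflationary} to reduce the fixed-point equation to a single inclusion. Since $I\subseteq\Gamma(I)$ holds for every ideal, the equation $\Gamma(I)=I$ is equivalent to $\Gamma(I)\subseteq I$. So the whole argument comes down to showing that, for a Turing ideal $I$, the inclusion $\Gamma(I)\subseteq I$ holds exactly when $I$ is closed under the jump.

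For (1)$\Rightarrow$(2), assume $\Gamma(I)=I$ and let $\mathbf a\in I$. By definition $\mathbf a'$ is one of the generators of $\Gamma(I)$, so $\mathbf a'\in\Gamma(I)=I$. Hence $I$ is closed under the jump.

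For (2)$\Rightarrow$(1), assume $I$ is jump-closed and let $\mathbf b\in\Gamma(I)$. Then $\mathbf b\leT\mathbf a'$ for some $\mathbf a\in I$. Jump closure gives $\mathbf a'\in I$, and since $I$ is a lower set, $\mathbf b\in I$. This proves $\Gamma(I)\subseteq I$, and together with \eqref{eq:Gamma-inflationary} it yields equality.

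There is no real obstacle here. The only point to handle carefully is the direction $I\subseteq\Gamma(I)$, and that is supplied by the strict progressiveness \eqref{eq:jump-strict}: it gives $\mathbf a\leT\mathbf a'$, so every $\mathbf a\in I$ lies in $\down\mathbf a'\subseteq\Gamma(I)$. Directedness of $I$ is not needed for this argument; we only use that $I$ is a lower set and that $\Gamma(I)$ is well defined, which is guaranteed by \cref{prop:ideal-extension}.
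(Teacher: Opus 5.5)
Your proof is correct and matches the paper's argument: (1)$\Rightarrow$(2) because $\mathbf a'$ is a generator of $\Gamma(I)=I$, and (2)$\Rightarrow$(1) because downward closure gives $\Gamma(I)\subseteq I$, which combined with the inflationary inclusion $I\subseteq\Gamma(I)$ yields equality. Your side remark that directedness of $I$ is not used in the equivalence is also accurate.
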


\begin{proof}
If $\Gamma(I)=I$ and $\mathbf a\in I$, then
$\mathbf a'\in\Gamma(I)=I$. Conversely, if $I$ is jump-closed, then each generator $\mathbf a'$ in \eqref{eq:Gamma} already belongs to $I$, so downward closure gives $\Gamma(I)\subseteq I$. The reverse inclusion follows from \eqref{eq:Gamma-inflationary}.
\end{proof}

The fixed points of the lifted jump are therefore exactly the jump-closed Turing ideals.

For a degree $\mathbf a$, define
\begin{equation}
\operatorname{Arith}(\mathbf a)
=
\bigcup_{n<\omega}\down\mathbf a^{(n)}.
\label{eq:arith-a}
\end{equation}

\begin{theorem}[Least jump fixed point]
\label{thm:least-jump-fixed}
Starting from $\down\mathbf a$, the finite iterates of $\Gamma$ satisfy
\begin{equation}
\Gamma^n(\down\mathbf a)=\down\mathbf a^{(n)}
\label{eq:finite-iterates}
\end{equation}
for every $n<\omega$, and
\begin{equation}
\lfp_{\geq\down\mathbf a}(\Gamma)
=
\bigcup_{n<\omega}\down\mathbf a^{(n)}
=
\operatorname{Arith}(\mathbf a).
\label{eq:relative-lfp}
\end{equation}
The closure ordinal of this iteration is exactly $\omega$.
\end{theorem}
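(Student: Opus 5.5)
The plan is to argue in three steps: compute the finite iterates, identify the least fixed point above $\down\mathbf a$ as their union, and then pin the closure ordinal at exactly $\omega$.

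\emph{Finite iterates.} I will prove \eqref{eq:finite-iterates} by induction on $n$, using only \eqref{eq:Gamma-principal}. The case $n=0$ is trivial. For the step,
\[
\Gamma^{n+1}(\down\mathbf a)=\Gamma(\down\mathbf a^{(n)})=\down(\mathbf a^{(n)})'=\down\mathbf a^{(n+1)}.
\]
Because $\Gamma$ is inflationary by \eqref{eq:Gamma-inflationary}, these iterates form an increasing chain. By \eqref{eq:jump-strict} the chain is strictly increasing: $\mathbf a^{(n+1)}\notin\down\mathbf a^{(n)}$. Hence no finite stage is a fixed point, and the closure ordinal is at least $\omega$.

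\emph{Least fixed point.} The family $\{\down\mathbf a^{(n)}\}_{n<\omega}$ is a chain, hence directed. Its union $\Arith(\mathbf a)$ is therefore an ideal, namely the directed supremum in $\Idl(\DT)$. Scott continuity from \cref{prop:ideal-extension} gives
\[
\Gamma(\Arith(\mathbf a))=\bigcup_n\Gamma(\down\mathbf a^{(n)})=\bigcup_n\down\mathbf a^{(n+1)}=\Arith(\mathbf a),
\]
so stage $\omega$ is a fixed point. This can also be seen directly via \cref{prop:jump-ideal}: if $\mathbf b\leT\mathbf a^{(n)}$, then $\mathbf b'\leT\mathbf a^{(n+1)}$ by monotonicity of the jump.

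For leastness, let $K$ be any fixed point with $\down\mathbf a\subseteq K$. Monotonicity of $\Gamma$ gives $\down\mathbf a^{(n)}=\Gamma^n(\down\mathbf a)\subseteq\Gamma^n(K)=K$ for every $n$, so $\Arith(\mathbf a)\subseteq K$. This is the standard Kleene argument restricted to the sub-dcpo of ideals above $\down\mathbf a$, which $\Gamma$ preserves because it is inflationary. The identification with ``degrees arithmetical in $\mathbf a$'' is then just Post's theorem relativized, and it is not needed for the equality \eqref{eq:relative-lfp} as literally stated, since $\Arith(\mathbf a)$ is defined by \eqref{eq:arith-a}.

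\emph{Closure ordinal.} Stage $\omega$ is a fixed point, and no stage $n<\omega$ is one by the strictness shown above, so the closure ordinal is exactly $\omega$. No step is a real obstacle here. The only point needing care is to state precisely what ``closure ordinal'' means: the least ordinal $\alpha$ at which the transfinite iteration (unions at limits) stabilizes. With that convention, both the lower bound from \eqref{eq:jump-strict} and the upper bound from Scott continuity are immediate.
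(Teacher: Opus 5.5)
Your proposal is correct and follows the paper's proof: induction via $\Gamma(\down\mathbf a)=\down\mathbf a'$ for the finite iterates, Scott continuity to show that the union $\bigcup_n\down\mathbf a^{(n)}$ is fixed, and strictness of the jump to rule out any finite stage. Your leastness argument via monotonicity, $\Gamma^n(\down\mathbf a)\subseteq\Gamma^n(K)=K$, is a slightly more explicit version of the paper's remark that every fixed ideal containing $\down\mathbf a$ must contain each $\mathbf a^{(n)}$.
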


\begin{proof}
Equation \eqref{eq:finite-iterates} follows by induction from \eqref{eq:Gamma-principal}. Since $\Gamma$ is Scott-continuous,
\begin{align*}
\Gamma\!\left(\bigcup_{n<\omega}\down\mathbf a^{(n)}\right)
&=
\bigcup_{n<\omega}\Gamma(\down\mathbf a^{(n)})\\
&=
\bigcup_{n<\omega}\down\mathbf a^{(n+1)}\\
&=
\bigcup_{n<\omega}\down\mathbf a^{(n)}.
\end{align*}
Thus the union is fixed. It is least among fixed ideals containing $\down\mathbf a$, because every such ideal must contain every finite jump $\mathbf a^{(n)}$ and hence everything below one of them.

No finite stage is fixed, since \eqref{eq:jump-strict} gives
$\mathbf a^{(n)}\ltT\mathbf a^{(n+1)}$. Hence the first fixed stage is $\omega$.
\end{proof}

For $\mathbf a=\mathbf0$, Post's theorem identifies \eqref{eq:arith-a} with the degrees of the arithmetical sets \cite{Post1944}.

\begin{corollary}
\label{cor:arith-lfp}
The global least fixed point of $\Gamma$ is
\[
\lfp(\Gamma)
=
\Arith
=
\bigcup_{n<\omega}\down\mathbf0^{(n)},
\]
where $\Arith$ denotes the Turing ideal of arithmetical degrees.
\end{corollary}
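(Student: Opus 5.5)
The plan is to specialize \cref{thm:least-jump-fixed} to $\mathbf a=\mathbf0$ and then use Post's theorem to identify the resulting union with the arithmetical ideal. There are three things to check, in order.

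First, the global least fixed point coincides with the least fixed point above $\down\mathbf0$. Since $\mathbf0$ is the least degree, every Turing ideal is nonempty and downward closed, so it contains $\mathbf0$. Hence $\down\mathbf0\subseteq I$ for every $I\in\Idl(\DT)$, and $\down\mathbf0$ is the bottom element of the dcpo $\Idl(\DT)$. The fixed points of $\Gamma$ above $\down\mathbf0$ are therefore all fixed points, so $\lfp(\Gamma)=\lfp_{\geq\down\mathbf0}(\Gamma)$. By \eqref{eq:relative-lfp} this equals $\bigcup_{n<\omega}\down\mathbf0^{(n)}$. Alternatively, Kleene's theorem for the Scott-continuous $\Gamma$ gives $\lfp(\Gamma)=\bigcup_n\Gamma^n(\bot)$, and \eqref{eq:finite-iterates} identifies these iterates.

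Second, we need $\bigcup_n\down\mathbf0^{(n)}=\Arith$ as sets of degrees. Fix a set $X\subseteq\omega$.
\begin{itemize}
\item If $X$ is arithmetical, then $X\in\Sigma^0_{n+1}$ for some $n$. By Post's theorem $X$ is c.e.\ in $0^{(n)}$, hence $X\leT 0^{(n)}$, so $\deg_T(X)\in\down\mathbf0^{(n)}$.
\item Conversely, if $X\leT 0^{(n)}$, then $X$ is $\Delta^0_{n+1}$ by Post's theorem, hence arithmetical.
\end{itemize}
Here "arithmetical degree" means a degree with some (equivalently every) arithmetical member. This is well defined because arithmeticity is closed under Turing reducibility: a set reducible to a $\Delta^0_{n+1}$ set is again $\Delta^0_{n+1}$, again by Post's theorem. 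This also confirms that $\Arith$ is a Turing ideal, although that already follows from its description as a directed union of principal ideals.

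No step is a genuine obstacle; the work is bookkeeping. The point most likely to need care is the direction "arithmetical implies below some $\mathbf0^{(n)}$". It uses the full relativized Post theorem ($\Sigma^0_{n+1}$ equals c.e.\ in $0^{(n)}$), which I will cite from \cite{Post1944} rather than reprove. I would also note that $\Arith$ is a nonprincipal fixed point, since $\mathbf0^{(n)}\ltT\mathbf0^{(n+1)}$ rules out any largest element. This is not needed for the corollary.
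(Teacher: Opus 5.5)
Your overall route is the paper's: the corollary is \cref{thm:least-jump-fixed} at $\mathbf a=\mathbf0$ combined with Post's theorem. Your observation that $\down\mathbf0=\{\mathbf0\}$ is the bottom of $\Idl(\DT)$, so that the least fixed point above $\down\mathbf0$ is the global one, makes explicit a step the paper leaves tacit. There is, however, a false inference in your first bullet. From $X\in\Sigma^0_{n+1}$, i.e.\ $X$ c.e.\ in $0^{(n)}$, you conclude $X\leT 0^{(n)}$. Being c.e.\ in an oracle does not make a set computable from that oracle. Indeed $0^{(n+1)}=(0^{(n)})'$ is itself $\Sigma^0_{n+1}$, and $0^{(n+1)}\leT 0^{(n)}$ would contradict \eqref{eq:jump-strict}. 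What Post's theorem (or just the definition of the jump) actually gives is that $X$ is many-one reducible to $(0^{(n)})'=0^{(n+1)}$. Equivalently, $\Sigma^0_{n+1}\subseteq\Delta^0_{n+2}$, so $\deg_T(X)\in\down\mathbf0^{(n+1)}$.

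Because the target is only membership in $\bigcup_{n}\down\mathbf0^{(n)}$, this index shift is harmless, and the argument is correct once the bullet is amended. As written, though, the step fails, and it is exactly the direction you singled out as needing care. The remaining pieces are fine: the $\Delta^0_{n+1}$ direction, the closure of arithmeticity under $\leT$, and hence the well-definedness of ``arithmetical degree''.
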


This has the standard reverse-mathematical interpretation that the second-order part of an $\omega$-model of $\mathsf{ACA}_0$ is a Turing ideal closed under jump, while the arithmetical sets form the least such $\omega$-model \cite{Simpson2009}.

\begin{corollary}[No principal fixed point]
\label{cor:no-principal}
No principal Turing ideal is a fixed point of $\Gamma$.
\end{corollary}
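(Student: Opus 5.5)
The plan is a direct combination of \eqref{eq:Gamma-principal}, \cref{prop:jump-ideal}, and the strictness of the jump \eqref{eq:jump-strict}. Suppose, for contradiction, that some principal ideal $\down\mathbf a$ satisfies $\Gamma(\down\mathbf a)=\down\mathbf a$. By \eqref{eq:Gamma-principal} the left side is $\down\mathbf a'$, so the assumption gives the equality of principal ideals
\[
\down\mathbf a'=\down\mathbf a .
\]

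Next I will use the fact that $\eta$ is an order embedding and hence injective. The top element of a principal ideal $\down\mathbf b$ is $\mathbf b$ itself. So $\mathbf a'\in\down\mathbf a'=\down\mathbf a$ gives $\mathbf a'\leT\mathbf a$. Combined with $\mathbf a\leT\mathbf a'$, this yields $\mathbf a'=\mathbf a$ as degrees. That contradicts \eqref{eq:jump-strict}, which gives $\mathbf a\ltT\mathbf a'$.

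An alternative route goes through \cref{prop:jump-ideal}. A fixed principal ideal $\down\mathbf a$ would be jump-closed, so $\mathbf a'\in\down\mathbf a$, and the same contradiction follows. Either route settles the statement.

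There is no real obstacle here. The only point needing care is that Turing ideals are nonempty by definition, so every principal ideal really has the form $\down\mathbf a$ for a degree $\mathbf a$, and the argument covers all of them uniformly, including $\down\mathbf0$. A remark could also note the consequence: by \cref{cor:arith-lfp} the least fixed point $\Arith$ is therefore nonprincipal, and so it has no greatest element.
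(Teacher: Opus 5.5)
Your proof is correct and is essentially the paper's own argument: apply \eqref{eq:Gamma-principal} to get $\down\mathbf a'=\down\mathbf a$, conclude $\mathbf a'=\mathbf a$, and contradict \eqref{eq:jump-strict}. You additionally make explicit the injectivity step that the paper leaves implicit (via $\mathbf a'\in\down\mathbf a$), and your alternative route through \cref{prop:jump-ideal} is equally valid.
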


\begin{proof}
If $\Gamma(\down\mathbf a)=\down\mathbf a$, then by \eqref{eq:Gamma-principal},
$\down\mathbf a'=\down\mathbf a$, hence $\mathbf a'=\mathbf a$, contradicting \eqref{eq:jump-strict}.
\end{proof}

Thus completion has not manufactured a degree that computes its own halting problem. The fixed point is intrinsically nonprincipal. It is better understood as a \emph{domain of solvers closed under producing the next relativized halting problem}.

\section{Uniformization at a limit}

The fixed point $\Arith$ does not contain $\mathbf0^{(\omega)}$. This is the key to continuing the hierarchy.

Each degree $\mathbf0^{(n)}$ belongs to $\Arith$, but the single oracle
\begin{equation}
0^{(\omega)}=\bigjoin_{n<\omega}0^{(n)}
\label{eq:omega-jump}
\end{equation}
is not arithmetical. The issue is uniformity. Membership of every finite stage in an ideal says
\[
\forall n\;\mathbf0^{(n)}\in I,
\]
but it does not provide one element of $I$ from which the whole sequence can be uniformly recovered. Classical work on uniform upper bounds for Turing ideals studies a related and substantially more general notion \cite{Hodes1982}; here we use only the canonical uniform code supplied by a specified jump hierarchy.

\begin{definition}[Uniformization gate]
\label{def:uniformization}
Let
\[
\mathbf a_0\ltT\mathbf a_1\ltT\mathbf a_2\ltT\cdots
\]
be a strictly increasing sequence of Turing degrees. Put
\begin{equation}
A=\bigcup_{n<\omega}\down\mathbf a_n.
\label{eq:A-chain}
\end{equation}
Let $\mathbf b$ satisfy
\begin{equation}
\mathbf a_n\leT\mathbf b\quad\text{for every }n,
\label{eq:b-upper}
\end{equation}
and
\begin{equation}
\mathbf b\notin A.
\label{eq:b-not-A}
\end{equation}
Define
\begin{equation}
U_{A,\mathbf b}(I)
=
\begin{cases}
\operatorname{Idl}(I\cup\{\mathbf b\}),&A\subseteq I,\\[3pt]
I,&A\nsubseteq I.
\end{cases}
\label{eq:uniformization}
\end{equation}
\end{definition}

The operator waits for completion of the entire chain and then makes the designated uniform upper bound available.

\begin{theorem}[Uniformization is discontinuous]
\label{thm:uniformization-discontinuous}
The operator $U_{A,\mathbf b}$ is monotone and inflationary, but it is not Scott-continuous.
\end{theorem}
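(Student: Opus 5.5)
I will first pin down what $\operatorname{Idl}(I\cup\{\mathbf b\})$ means: the Turing ideal generated by $I\cup\{\mathbf b\}$, i.e.\ $\{\mathbf c:\mathbf c\leT\mathbf x\vee\mathbf b\text{ for some }\mathbf x\in I\}$. This is the least Turing ideal containing $I$ and $\mathbf b$, since $I$ is directed and joins exist in $\DT$. With this description, $U_{A,\mathbf b}(I)$ is an ideal, and it contains $I$ in both cases of \eqref{eq:uniformization}. So $U_{A,\mathbf b}$ is well defined and inflationary.

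For monotonicity, I take $I\subseteq J$ and split into cases. If $A\nsubseteq I$, then $U_{A,\mathbf b}(I)=I\subseteq J\subseteq U_{A,\mathbf b}(J)$ by inflationarity. If $A\subseteq I$, then also $A\subseteq J$. In that case both values are generated ideals, and $I\cup\{\mathbf b\}\subseteq J\cup\{\mathbf b\}$ gives the inclusion, because generation is monotone.

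For the failure of Scott continuity, I will use the directed chain $I_n=\down\mathbf a_n$, whose union is $A$ by \eqref{eq:A-chain}.
\begin{itemize}
\item Each $I_n$ omits $\mathbf a_{n+1}$, since $\mathbf a_n\ltT\mathbf a_{n+1}$. Hence $A\nsubseteq I_n$ and $U_{A,\mathbf b}(I_n)=I_n$, so $\bigcup_nU_{A,\mathbf b}(I_n)=A$.
\item On the other hand, $U_{A,\mathbf b}(A)=\operatorname{Idl}(A\cup\{\mathbf b\})\ni\mathbf b$.
\item By \eqref{eq:b-not-A}, $\mathbf b\notin A$.
\end{itemize}
Therefore $U_{A,\mathbf b}\bigl(\bigcup_nI_n\bigr)\neq\bigcup_nU_{A,\mathbf b}(I_n)$, so $U_{A,\mathbf b}$ does not preserve directed suprema.

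The only delicate point is interpretive. The counterexample needs strictness of the chain, so that no finite stage already triggers the gate, and it needs \eqref{eq:b-not-A}. Condition \eqref{eq:b-upper} is not used here; it only guarantees that adjoining $\mathbf b$ is coherent with the chain.
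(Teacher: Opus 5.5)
Your proof is correct and follows the paper's argument. You use the same case split for monotonicity and the same witnessing chain $I_n=\down\mathbf a_n$, where strictness keeps the gate closed at every finite stage. The only difference is that the paper invokes \eqref{eq:b-upper} to identify $U_{A,\mathbf b}(A)$ as $\down\mathbf b$, whereas you just observe $\mathbf b\in U_{A,\mathbf b}(A)\setminus A$; as you note, this shows that hypothesis is not needed for the discontinuity.
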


\begin{proof}
Inflationarity is immediate. For monotonicity, suppose $I\subseteq J$. If $A\nsubseteq I$, then either $A\nsubseteq J$, in which case
$U(I)=I\subseteq J=U(J)$, or $A\subseteq J$, in which case
$I\subseteq J\subseteq U(J)$. If $A\subseteq I$, then $A\subseteq J$, and ideal generation is monotone.

For discontinuity, take
\[
I_n=\down\mathbf a_n.
\]
Then $(I_n)_{n<\omega}$ is an increasing chain and
\[
\bigcup_nI_n=A.
\]
For every $n$, $A\nsubseteq I_n$ because $\mathbf a_{n+1}\not\leT\mathbf a_n$. Hence $U(I_n)=I_n$ and
\begin{equation}
\bigcup_nU(I_n)=A.
\label{eq:U-union}
\end{equation}
On the other hand,
\[
U(A)=\operatorname{Idl}(A\cup\{\mathbf b\}).
\]
By \eqref{eq:b-upper}, every member of $A$ lies below $\mathbf b$, so
\[
U(A)=\down\mathbf b.
\]
By \eqref{eq:b-not-A}, this strictly contains $A$. Therefore
\[
U\!\left(\bigcup_nI_n\right)
\neq
\bigcup_nU(I_n),
\]
so $U$ is not Scott-continuous.
\end{proof}

The theorem identifies the exact informational source of the discontinuity: the condition that the entire chain has been completed cannot be witnessed at any finite member of the directed system.

\subsection{The \texorpdfstring{$\omega$}{omega}-jump gate}

Take
\[
\mathbf a_n=\mathbf0^{(n)}
\quad\text{and}\quad
\mathbf b=\mathbf0^{(\omega)},
\]
where $0^{(\omega)}$ is given by \eqref{eq:omega-jump}. Let
\[
A_\omega
=
\bigcup_{n<\omega}\down\mathbf0^{(n)}
=
\Arith.
\]
Clearly $\mathbf0^{(n)}\leT\mathbf0^{(\omega)}$ for every $n$.

Moreover,
\begin{equation}
\mathbf0^{(\omega)}\notin A_\omega.
\label{eq:omega-not-arith}
\end{equation}
Indeed, if $\mathbf0^{(\omega)}\leT\mathbf0^{(n)}$ for some $n$, then since
$\mathbf0^{(n+1)}\leT\mathbf0^{(\omega)}$, we would have
$\mathbf0^{(n+1)}\leT\mathbf0^{(n)}$, contradicting strictness of the jump.

Let $U_\omega$ denote the corresponding instance of \eqref{eq:uniformization}. Then
\begin{equation}
U_\omega(A_\omega)=\down\mathbf0^{(\omega)}.
\label{eq:omega-gate-fires}
\end{equation}
Thus $A_\omega$, although fixed by jump closure, can be escaped by uniformizing the information distributed through it. Once this happens,
\[
\Gamma(\down\mathbf0^{(\omega)})
=
\down\mathbf0^{(\omega+1)},
\]
and diagonalization resumes.

Schematically,
\begin{equation}
\boxed{
\text{jump iteration}
\longrightarrow
\text{jump-closed fixed ideal}
\longrightarrow
\text{uniform limit code}
\longrightarrow
\text{new jump iteration}.}
\label{eq:escape-schema}
\end{equation}

The transfinite hierarchy is therefore not a sequence in which diagonalization is eventually defeated. Fixed closure and renewed diagonalization alternate.

\section{A closure ordinal of \texorpdfstring{$\omega\cdot2$}{omega times 2}}

We now combine the two operations into a single monotone transfinite induction. Define
\begin{equation}
\Theta_2=U_\omega\circ\Gamma.
\label{eq:theta2}
\end{equation}
Since both factors are monotone and inflationary, so is $\Theta_2$. By \cref{thm:uniformization-discontinuous}, it is not Scott-continuous.

Starting with
\[
I_0=\down\mathbf0,
\]
define
\begin{equation}
I_{\alpha+1}=\Theta_2(I_\alpha),
\label{eq:successor-iteration}
\end{equation}
and, at limit ordinals,
\begin{equation}
I_\lambda=\bigcup_{\alpha<\lambda}I_\alpha.
\label{eq:limit-iteration}
\end{equation}

For $n<\omega$, the gate has not fired, so
\begin{equation}
I_n=\down\mathbf0^{(n)}.
\label{eq:first-block}
\end{equation}
Consequently,
\begin{equation}
I_\omega=A_\omega=\Arith.
\label{eq:stage-omega}
\end{equation}
At this point ordinary jump closure has stabilized,
$\Gamma(I_\omega)=I_\omega$, but the uniformization gate fires:
\begin{equation}
I_{\omega+1}=\down\mathbf0^{(\omega)}.
\label{eq:omega-plus-one}
\end{equation}
Successive stages then satisfy
\begin{equation}
I_{\omega+n+1}
=
\down\mathbf0^{(\omega+n)}
\qquad(n<\omega).
\label{eq:second-block}
\end{equation}

Put
\begin{equation}
A_{\omega\cdot2}
=
\bigcup_{n<\omega}\down\mathbf0^{(\omega+n)}.
\label{eq:A-omega2}
\end{equation}
(The earlier finite-jump ideals are already contained in this union.)

\begin{theorem}
\label{thm:omega-times-two}
The closure ordinal of $\Theta_2$ from $\down\mathbf0$ is exactly $\omega\cdot2$, and
\[
\lfp(\Theta_2)=A_{\omega\cdot2}.
\]
\end{theorem}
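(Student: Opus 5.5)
We prove the theorem by computing the transfinite iteration stage by stage, checking that $I_{\omega\cdot2}$ is a fixed point of $\Theta_2$, and showing that it is the least fixed point.

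\textbf{Step 1: the finite stages.} For $n<\omega$ we check \eqref{eq:first-block} by induction. Suppose $I_n=\down\mathbf0^{(n)}$. By \eqref{eq:Gamma-principal}, $\Gamma(I_n)=\down\mathbf0^{(n+1)}$. This ideal does not contain $A_\omega$, because $\mathbf0^{(n+2)}\not\leT\mathbf0^{(n+1)}$ by \eqref{eq:jump-strict}. So the gate $U_\omega$ acts as the identity and $I_{n+1}=\down\mathbf0^{(n+1)}$. Taking the union gives \eqref{eq:stage-omega}.

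\textbf{Step 2: stage $\omega+1$.} By \cref{cor:arith-lfp}, $\Gamma(I_\omega)=\Arith=A_\omega$. The gate therefore fires, and \eqref{eq:omega-gate-fires} gives \eqref{eq:omega-plus-one}.

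\textbf{Step 3: the second block.} We prove \eqref{eq:second-block} by induction on $n$. Suppose $I_{\omega+n+1}=\down\mathbf0^{(\omega+n)}$. Then
\[
\Gamma(I_{\omega+n+1})=\down\mathbf0^{(\omega+n+1)}.
\]
This ideal contains $A_\omega$, because $\mathbf0^{(k)}\leT\mathbf0^{(\omega)}\leT\mathbf0^{(\omega+n+1)}$ for every $k$. So the gate fires again. However, $\operatorname{Idl}(J\cup\{\mathbf0^{(\omega)}\})=J$ whenever $\mathbf0^{(\omega)}\in J$, so the gate adds nothing new. Hence
\[
I_{\omega+n+2}=\down\mathbf0^{(\omega+n+1)}.
\]
Taking the union gives $I_{\omega\cdot2}=A_{\omega\cdot2}$. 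The earlier stages are already contained in $\down\mathbf0^{(\omega)}$.

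\textbf{Step 4: $A_{\omega\cdot2}$ is a fixed point.} By Scott-continuity of $\Gamma$, exactly as in the proof of \cref{thm:least-jump-fixed},
\[
\Gamma(A_{\omega\cdot2})=\bigcup_{n<\omega}\down\mathbf0^{(\omega+n+1)}=A_{\omega\cdot2}.
\]
Since $A_\omega\subseteq A_{\omega\cdot2}$ and $\mathbf0^{(\omega)}\in A_{\omega\cdot2}$, the gate fires but adds nothing. Hence $\Theta_2(A_{\omega\cdot2})=A_{\omega\cdot2}$. We never need continuity of $U_\omega$; only its value on this particular ideal is used.

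\textbf{Step 5: no earlier stage is fixed.} Each finite stage is strictly contained in the next, by \eqref{eq:jump-strict}. The stage $I_\omega$ is strictly contained in $I_{\omega+1}$, by \eqref{eq:omega-not-arith}. Each stage $I_{\omega+n+1}$ is strictly contained in $I_{\omega+n+2}$, again by strictness of the jump. So the closure ordinal is exactly $\omega\cdot2$.

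\textbf{Step 6: leastness.} Let $K$ be any fixed point of $\Theta_2$. Since $\Theta_2$ is inflationary, $K$ is nonempty, and Turing ideals are lower sets, we have $\down\mathbf0\subseteq K$. From $\Theta_2(K)=K$ and inflationarity of both factors we get $\Gamma(K)\subseteq U_\omega(\Gamma(K))=K$, so $\Gamma(K)=K$. By \cref{thm:least-jump-fixed}, $\Arith\subseteq K$. The gate then fires on $\Gamma(K)=K$, so $\mathbf0^{(\omega)}\in K$. Jump-closure of $K$ then gives $\mathbf0^{(\omega+n)}\in K$ for all $n$, and hence $A_{\omega\cdot2}\subseteq K$.

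Alternatively, one can use the standard fact that the transfinite iteration of a monotone inflationary operator from the bottom element stays below every fixed point, by induction on $\alpha$. Here the bottom of $\Idl(\DT)$ is $\down\mathbf0$.

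\textbf{Main obstacle.} The delicate point is Step 3, together with the fixed-point check in Step 4. One must see that the gate, which remains "open" forever after stage $\omega$, is idempotent: it merely re-adjoins $\mathbf0^{(\omega)}$, which is already present. One must also see that the failure of Scott-continuity of $U_\omega$ causes no trouble at the limit $\omega\cdot2$.
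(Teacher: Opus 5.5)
Your proof is correct and takes essentially the same route as the paper: compute the iteration block by block, check that $A_{\omega\cdot2}$ is fixed by $\Gamma$ and that the gate $U_\omega$ only re-adjoins $\mathbf0^{(\omega)}$, and note that every earlier stage moves, either by strictness of the jump or because the gate fires at $A_\omega$. Beyond that, you verify the stage equations by explicit induction, use Scott-continuity of $\Gamma$ in Step 4 where the paper checks jump-closure directly, and in Step 6 prove leastness of $A_{\omega\cdot2}$ explicitly, which the paper leaves implicit in the fact that the iteration starts at the bottom element $\down\mathbf0$.
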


\begin{proof}
Equations \eqref{eq:first-block}--\eqref{eq:second-block} determine the iteration below $\omega\cdot2$. At the limit,
\[
I_{\omega\cdot2}=A_{\omega\cdot2}.
\]
This ideal is jump-closed. If $\mathbf x\leT\mathbf0^{(\omega+n)}$, then
\[
\mathbf x'\leT\mathbf0^{(\omega+n+1)},
\]
which is again represented in the union. Hence
\[
\Gamma(A_{\omega\cdot2})=A_{\omega\cdot2}.
\]
The $U_\omega$ gate adds nothing new because $\mathbf0^{(\omega)}\in A_{\omega\cdot2}$. Therefore
\[
\Theta_2(A_{\omega\cdot2})=A_{\omega\cdot2}.
\]
No earlier stage is fixed: every principal stage is moved by strictness of the jump, while $A_\omega$ is moved by the uniformization gate. Thus the first fixed stage is $\omega\cdot2$.
\end{proof}

The ordinal $\omega\cdot2$ has a direct meaning in this semantics. The first $\omega$-block closes under all finite jumps. Limit uniformization packages that block into $0^{(\omega)}$. A second $\omega$-block then closes under every finite jump relative to this new oracle.

Notice that $A_{\omega\cdot2}$ again contains every $\mathbf0^{(\omega+n)}$ but does not contain the next uniform limit oracle $\mathbf0^{(\omega\cdot2)}$. The same mechanism can therefore be repeated.

\section{Finite blocks and closure ordinal \texorpdfstring{$\omega^2$}{omega squared}}

We now iterate the construction through all finite multiples of $\omega$. Since all ordinals below $\omega^2$ have canonical notation $\omega k+n$ with $k,n<\omega$, no issues of notation invariance arise at this stage.

For $\alpha<\omega^2$, write
\[
\mathbf j_\alpha=\mathbf0^{(\alpha)}.
\]
Successor stages satisfy
\begin{equation}
\mathbf j_{\alpha+1}=\mathbf j_\alpha'.
\label{eq:j-successor}
\end{equation}
For $k\geq1$, choose the standard limit representative
\begin{equation}
0^{(\omega k)}
=
\bigjoin_{n<\omega}0^{(\omega(k-1)+n)}.
\label{eq:block-limit-join}
\end{equation}
Because the displayed sequence is cofinal below $\omega k$ and higher finite jumps compute all lower stages, this degree uniformly codes the entire preceding initial segment.

For a nonzero limit $\lambda=\omega k<\omega^2$, put
\begin{equation}
A_\lambda
=
\bigcup_{\beta<\lambda}\down\mathbf j_\beta.
\label{eq:A-lambda}
\end{equation}
Define the corresponding gate
\begin{equation}
U_\lambda(I)
=
\begin{cases}
\operatorname{Idl}(I\cup\{\mathbf j_\lambda\}),&A_\lambda\subseteq I,\\[3pt]
I,&A_\lambda\nsubseteq I.
\end{cases}
\label{eq:U-lambda}
\end{equation}
Every $U_\lambda$ is monotone and non-Scott-continuous by \cref{thm:uniformization-discontinuous}; a cofinal chain witnessing discontinuity is
\[
\down\mathbf j_{\omega(k-1)}
\subseteq
\down\mathbf j_{\omega(k-1)+1}
\subseteq\cdots.
\]

For $m\geq1$, define a combined uniformization operator
\begin{equation}
\mathcal U_m(I)
=
\operatorname{Idl}\!\left(
I\cup
\left\{
\mathbf j_{\omega k}:
1\leq k<m
\ \text{and}\ 
A_{\omega k}\subseteq I
\right\}
\right).
\label{eq:Um}
\end{equation}
Thus $\mathcal U_m$ contains gates at
\[
\omega,2\omega,\ldots,\omega(m-1).
\]
Put
\begin{equation}
\Theta_m=\mathcal U_m\circ\Gamma.
\label{eq:Thetam}
\end{equation}
For $m=1$, there are no gates and $\Theta_1=\Gamma$.

\begin{theorem}[Finite block theorem]
\label{thm:finite-block}
For every integer $m\geq1$, the closure ordinal of $\Theta_m$, starting from $\down\mathbf0$, is
\[
\omega m,
\]
and its least fixed point is
\begin{equation}
A_{\omega m}
=
\bigcup_{\beta<\omega m}\down\mathbf0^{(\beta)}.
\label{eq:A-omegam}
\end{equation}
\end{theorem}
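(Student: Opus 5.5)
We argue by computing the transfinite iteration $I_0=\down\mathbf0$, $I_{\alpha+1}=\Theta_m(I_\alpha)$, $I_\lambda=\bigcup_{\alpha<\lambda}I_\alpha$ explicitly. The central claim is
\[
I_\beta=\down\mathbf j_\beta\ \text{ for successor or zero }\beta<\omega m,\qquad I_{\omega k}=A_{\omega k}\ \text{ for }1\le k\le m,
\]
which generalizes \eqref{eq:first-block}--\eqref{eq:second-block}. We prove it by induction on $\beta<\omega m$, organized block by block: inside block $k$ we run over stages $\omega k+n$. Two monotonicity facts drive the induction. First, $\mathbf j_\beta\leT\mathbf j_\gamma$ whenever $\beta\le\gamma<\omega^2$. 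Second, $\mathbf j_\beta\notin A_\beta$ for each limit $\beta=\omega k$. Both are read off from \eqref{eq:j-successor} and \eqref{eq:block-limit-join}, exactly as \eqref{eq:omega-not-arith} was obtained: if $\mathbf j_{\omega k}\leT\mathbf j_{\omega(k-1)+n}$, then $\mathbf j_{\omega(k-1)+n+1}\leT\mathbf j_{\omega(k-1)+n}$, contradicting \eqref{eq:jump-strict}. Strictness of the whole sequence $(\mathbf j_\beta)$ then follows.

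\emph{Gates inside a block.} Suppose $I=\down\mathbf j_\beta$ with $\omega(k-1)\le\beta<\omega k$. Then $\Gamma(I)=\down\mathbf j_{\beta+1}$ by \eqref{eq:Gamma-principal}. Next we check which gates of $\mathcal U_m$ fire on this ideal. For $k'<k$ we have $A_{\omega k'}\subseteq\down\mathbf j_{\omega k'}\subseteq\Gamma(I)$, so the gate fires. It contributes $\mathbf j_{\omega k'}\leT\mathbf j_{\beta+1}$, which adds nothing. For $k'\ge k$, the degree $\mathbf j_{\beta+2}$ lies in $A_{\omega k'}$ (or $\beta+1$ is already the block end) but is not below $\mathbf j_{\beta+1}$. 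Hence $A_{\omega k'}\nsubseteq\Gamma(I)$ and the gate stays shut. This gives $I_{\beta+1}=\down\mathbf j_{\beta+1}$.

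\emph{Limit stages.} At a limit $\omega k$ with $k<m$, the union of the block equals $A_{\omega k}$, and $A_{\omega k}$ is $\Gamma$-fixed by the Scott-continuity argument of \cref{thm:least-jump-fixed}. The gate for $\omega k$ is present because $k<m$, and it fires. Every other gate is either redundant or shut, as before. Therefore $I_{\omega k+1}=\down\mathbf j_{\omega k}$. Here we use that $A_{\omega k}\cup\{\mathbf j_{\omega k}\}$ generates $\down\mathbf j_{\omega k}$ because $\mathbf j_{\omega k}$ bounds $A_{\omega k}$. The indexing shift mirrors \eqref{eq:omega-plus-one}: stage $\omega k+n+1$ is $\down\mathbf j_{\omega k+n}$. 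The bookkeeping of this offset is the only fiddly part of the successor analysis.

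\emph{Fixed point, first fixed stage, leastness.} At stage $\omega m$ we get $I_{\omega m}=A_{\omega m}$. This ideal is $\Gamma$-fixed, since $\mathbf x\leT\mathbf j_\beta$ implies $\mathbf x'\leT\mathbf j_{\beta+1}$ and $\beta+1<\omega m$. Every gate $k<m$ contributes $\mathbf j_{\omega k}\in A_{\omega m}$, so nothing new enters and $\Theta_m(A_{\omega m})=A_{\omega m}$. No earlier stage is fixed. Principal stages are moved by \eqref{eq:jump-strict}. Each limit stage $A_{\omega k}$ with $k<m$ is moved because the gate adds $\mathbf j_{\omega k}\notin A_{\omega k}$. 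For leastness, note that $\Theta_m$ is monotone and inflationary, as a composite of such maps, by \cref{thm:uniformization-discontinuous} and its analogue for joins of gates. Any fixed point $F$ then contains every $I_\alpha$, by transfinite induction: at successors $I_{\alpha+1}=\Theta_m(I_\alpha)\subseteq\Theta_m(F)=F$, and at limits by unions. Since every ideal contains $\down\mathbf0$, this gives $\lfp(\Theta_m)=A_{\omega m}$. We expect the main obstacle to be the rigorous verification that no higher gate fires prematurely. That verification is the use of $\mathbf j_{\beta+2}\notin\down\mathbf j_{\beta+1}$ above, and it rests on strictness of the whole sequence $(\mathbf j_\beta)$ across block boundaries.
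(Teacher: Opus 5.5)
Your proposal is correct and follows essentially the same route as the paper. Both compute the iteration block by block, show that $A_{\omega m}$ is jump-closed with all gates redundant, and use strict jumps and firing gates to rule out earlier fixed stages. You also supply two checks the paper leaves implicit: that no gate at $\omega k'$ with $k'\ge k$ fires prematurely, and (via monotonicity and transfinite induction) that the iterate lies below every fixed point.

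One slip: your displayed invariant $I_\beta=\down\mathbf j_\beta$ for successor $\beta$, and the conclusion $I_{\beta+1}=\down\mathbf j_{\beta+1}$ in the inside-block step, are wrong past $\omega$. The correct form, which you state yourself later, is $I_{\omega k+n+1}=\down\mathbf j_{\omega k+n}$ for $k\ge1$. So phrase that step as $\Theta_m(\down\mathbf j_\beta)=\down\mathbf j_{\beta+1}$ and let the stage index follow; nothing else in the argument changes.
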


\begin{proof}
The first block is the ordinary jump iteration:
\[
I_n=\down\mathbf j_n
\qquad(n<\omega),
\]
so $I_\omega=A_\omega$. If $m=1$, there is no gate at $\omega$, and \cref{thm:least-jump-fixed} gives the result.

Assume $m>1$. At $A_\omega$, jump closure adds nothing while the gate $U_\omega$ adds $\mathbf j_\omega$. Hence
\[
I_{\omega+1}=\down\mathbf j_\omega.
\]
Repeated jump closure gives
\[
I_{\omega+n+1}=\down\mathbf j_{\omega+n}
\qquad(n<\omega).
\]
At the next limit,
\[
I_{\omega\cdot2}=A_{\omega\cdot2}.
\]
If $2<m$, the $U_{\omega\cdot2}$ gate fires and yields
\[
I_{\omega\cdot2+1}=\down\mathbf j_{\omega\cdot2}.
\]
Continuing inductively, for every $1\leq k<m$,
\begin{equation}
I_{\omega k}=A_{\omega k}
\label{eq:block-boundary}
\end{equation}
and
\begin{equation}
I_{\omega k+n+1}=\down\mathbf j_{\omega k+n}
\qquad(n<\omega).
\label{eq:inside-block}
\end{equation}
At stage $\omega m$ we obtain $A_{\omega m}$.

This ideal is jump-closed. If $\mathbf x\leT\mathbf j_\beta$ for some $\beta<\omega m$, then
$\mathbf x'\leT\mathbf j_{\beta+1}$, and $\beta+1<\omega m$ because $\omega m$ is a limit ordinal. Thus $\Gamma(A_{\omega m})=A_{\omega m}$.

Every gate appearing in $\mathcal U_m$ corresponds to some $\omega k<\omega m$, and its limit degree $\mathbf j_{\omega k}$ already belongs to $A_{\omega m}$. Hence $\mathcal U_m(A_{\omega m})=A_{\omega m}$, so $A_{\omega m}$ is fixed.

No earlier stage is fixed. At a successor position strict jump progression forces movement. The only nonzero limit ordinals below $\omega m$ are the block boundaries $\omega k$ for $1\leq k<m$, and at each of those the corresponding uniformization gate forces movement. Hence the closure ordinal is exactly $\omega m$.
\end{proof}

We can activate all finite block gates simultaneously. Define
\begin{equation}
\mathcal U_{<\omega^2}(I)
=
\operatorname{Idl}\!\left(
I\cup
\left\{
\mathbf j_{\omega k}:
k\geq1
\ \text{and}\ 
A_{\omega k}\subseteq I
\right\}
\right)
\label{eq:Uomega2}
\end{equation}
and
\begin{equation}
\Theta_{<\omega^2}
=
\mathcal U_{<\omega^2}\circ\Gamma.
\label{eq:Thetaomega2}
\end{equation}

\begin{theorem}[$\omega^2$-closure]
\label{thm:omega-square}
The closure ordinal of $\Theta_{<\omega^2}$ from $\down\mathbf0$ is exactly
\[
\omega^2,
\]
and its least fixed point is
\begin{equation}
A_{\omega^2}
=
\bigcup_{\beta<\omega^2}\down\mathbf0^{(\beta)}.
\label{eq:A-omega-square}
\end{equation}
\end{theorem}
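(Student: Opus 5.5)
We follow the pattern of the proof of \cref{thm:finite-block}, computing the whole transfinite iteration $I_\alpha$ of $\Theta_{<\omega^2}$ from $I_0=\down\mathbf0$ below $\omega^2$. We then check that $I_{\omega^2}=A_{\omega^2}$ is fixed, and finally that no earlier stage is fixed.

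\textbf{Step 1: the stages below $\omega^2$.} The claim to establish, by induction on $k<\omega$, is
\[
I_{\omega k}=A_{\omega k}\ (k\geq1),\qquad I_{\omega k+n+1}=\down\mathbf j_{\omega k+n}\ (k,n<\omega),
\]
with $A_0$ read as $\down\mathbf0$. The key point at each stage is to determine exactly which gates are open. At a principal stage $\down\mathbf j_\beta$ with $\omega k\leq\beta<\omega(k+1)$, the gates with $k'\leq k$ have $A_{\omega k'}\subseteq\down\mathbf j_\beta$, so they are open. However, they only add $\mathbf j_{\omega k'}\leT\mathbf j_\beta$, which contributes nothing new. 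Gates with $k'>k$ stay closed, because $\mathbf j_{\beta+1}\in A_{\omega k'}$ but $\mathbf j_{\beta+1}\not\leT\mathbf j_\beta$. This uses strictness \eqref{eq:jump-strict} together with monotonicity of $\beta\mapsto\mathbf j_\beta$ below $\omega^2$, the latter coming from \eqref{eq:j-successor} and \eqref{eq:block-limit-join}. We apply the same check to $\Gamma(I)$ rather than to $I$, since the gates act after the jump. Hence $\Theta_{<\omega^2}(\down\mathbf j_\beta)=\down\mathbf j_{\beta+1}$ by \eqref{eq:Gamma-principal}. At a block boundary $I_{\omega k}=A_{\omega k}$, we have $\Gamma(A_{\omega k})=A_{\omega k}$, because $\omega k$ is a limit, exactly as in \cref{thm:finite-block}. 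The gate at $\omega k$ is now open. All earlier gates add elements already in $A_{\omega k}$, and later gates are closed because $\mathbf j_{\omega k}\notin A_{\omega k}$. We get $\mathbf j_{\omega k}\notin A_{\omega k}$ as in \eqref{eq:omega-not-arith}: if $\mathbf j_{\omega k}\leT\mathbf j_\beta$ with $\beta<\omega k$, then $\mathbf j_{\beta+1}\leT\mathbf j_\beta$. Since every element of $A_{\omega k}$ lies below $\mathbf j_{\omega k}$, we get $I_{\omega k+1}=\down\mathbf j_{\omega k}$. Limit stages are unions, and the union of the $k$-th block gives $A_{\omega(k+1)}$.

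\textbf{Step 2: the fixed point at $\omega^2$.} $I_{\omega^2}=\bigcup_{\alpha<\omega^2}I_\alpha=A_{\omega^2}$. This ideal is jump-closed, since $\beta<\omega^2$ implies $\beta+1<\omega^2$. Every open gate adds some $\mathbf j_{\omega k}$ with $\omega k<\omega^2$, which already lies in $A_{\omega^2}$. So $\Theta_{<\omega^2}(A_{\omega^2})=A_{\omega^2}$. Leastness among fixed points follows from the standard fact that the transfinite iteration of a monotone inflationary operator from the bottom element lands below every fixed point. Alternatively, by induction, any fixed ideal $F\supseteq\down\mathbf0$ contains every $\mathbf j_\beta$ for $\beta<\omega^2$: at successors use the jump, and at the limits $\omega k$ use the gate.

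\textbf{Step 3: no earlier stage is fixed.} Successor stages, and also the stage $0$, are principal, and they move by Step 1. The nonzero limits below $\omega^2$ are exactly the $\omega k$ with $k\geq1$, and at each of them the gate forces movement. We expect the main obstacle to be the gate bookkeeping in Step 1: ruling out that some later gate fires prematurely. The strict monotonicity of the $\mathbf j_\beta$ chain is what handles this.
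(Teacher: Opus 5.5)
Your proposal is correct and follows the paper's proof. The paper's skeleton is the same: compute the stages below $\omega^2$, show $A_{\omega^2}$ is jump-closed and that every open gate only adjoins some $\mathbf j_{\omega k}$ already present, then split earlier stages into successor positions and block boundaries. You differ only in doing the gate bookkeeping for $\Theta_{<\omega^2}$ directly rather than citing agreement with $\Theta_m$ below $\omega m$ via \cref{thm:finite-block}; this supplies the check, left implicit in the paper, that later gates never fire early, and you also add an explicit leastness argument. One harmless slip: for $k=0$ your formula $I_{\omega k+n+1}=\down\mathbf j_{\omega k+n}$ is off by one, since no step is spent on a gate at stage $0$ and in fact $I_n=\down\mathbf j_n$, but nothing downstream (in particular $I_\omega=A_\omega$) is affected.
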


\begin{proof}
For every finite $m$, the iteration agrees below $\omega m$ with the operator $\Theta_m$. Hence
\[
I_{\omega m}=A_{\omega m}.
\]
Taking the directed union over $m$ gives
\[
I_{\omega^2}
=
\bigcup_{m<\omega}A_{\omega m}
=
A_{\omega^2}.
\]
The ideal $A_{\omega^2}$ is jump-closed because $\beta<\omega^2$ implies $\beta+1<\omega^2$. Thus
\[
\Gamma(A_{\omega^2})=A_{\omega^2}.
\]
Every limit degree explicitly adjoined by $\mathcal U_{<\omega^2}$ has the form $\mathbf j_{\omega k}$ for finite $k$, hence already belongs to $A_{\omega^2}$. Therefore
\[
\mathcal U_{<\omega^2}(A_{\omega^2})=A_{\omega^2}.
\]
So $A_{\omega^2}$ is fixed.

If $\alpha<\omega^2$, then either $\alpha=\omega k+n$ with $n>0$, where successor jumping prevents stabilization, or $\alpha=\omega k$ with $k\geq1$, where the corresponding uniformization gate prevents stabilization. Therefore no earlier stage is fixed.
\end{proof}

For every $m\geq2$, and likewise for $\Theta_{<\omega^2}$, the operator is necessarily non-Scott-continuous. This follows abstractly because a Scott-continuous self-map of a pointed dcpo reaches its least fixed point at or before its ordinary $\omega$-chain. More concretely, the chain
\[
\down\mathbf0
\subseteq
\down\mathbf0'
\subseteq\cdots
\]
already witnesses the failure: the operator applied to the union can adjoin $\mathbf0^{(\omega)}$, whereas no finite stage can.

\section{Halting problems, transfinite jumps, and ordinal analysis}

\subsection{What is fixed?}

There are three distinct objects that should not be conflated.

First, an individual degree
\[
\mathbf0^{(n)},\quad
\mathbf0^{(\omega)},\quad\ldots
\]
is never fixed under the jump.

Second, an ideal such as
\[
A_\omega
=
\bigcup_{n<\omega}\down\mathbf0^{(n)}
\]
is fixed under non-uniform jump closure:
\[
\Gamma(A_\omega)=A_\omega.
\]

Third, the uniform limit degree $\mathbf0^{(\omega)}$ packages the preceding hierarchy into one oracle and therefore lies outside the fixed ideal it codes. The structure is
\[
A_\omega
\overset{U_\omega}{\longmapsto}
\down\mathbf0^{(\omega)}
\overset{\Gamma}{\longmapsto}
\down\mathbf0^{(\omega+1)}
\longmapsto\cdots.
\]

A fixed ideal means that every individual member's halting problem is represented somewhere else inside the same domain. It does \emph{not} mean that the domain has a single member solving all those problems uniformly. Once such a uniform member is introduced, it becomes subject to its own diagonal.

This gives the main conceptual principle:
\begin{center}
\fbox{\parbox{0.82\linewidth}{\centering Closure under relative halting can stabilize non-uniformly. Uniformizing the stabilized hierarchy creates a new oracle and therefore a new relativized halting problem.}}
\end{center}

\subsection{Successor and limit stages}

Classical transfinite jump iteration already distinguishes these operations. At a successor,
\begin{equation}
0^{(\alpha+1)}=(0^{(\alpha)})'.
\label{eq:transfinite-successor}
\end{equation}
At a recursive limit, an effective join codes a cofinal family of preceding stages,
\begin{equation}
0^{(\lambda)}\equivT
\bigjoin_{n<\omega}0^{(\lambda[n])},
\label{eq:transfinite-limit}
\end{equation}
where $\lambda[0]<\lambda[1]<\cdots$ is an effective fundamental sequence cofinal in $\lambda$. The exact formulation is carried out using recursive ordinal notations; Spector's work gives the relevant invariance results for the resulting degrees \cite{Spector1955,Slaman2005}.

Equations \eqref{eq:transfinite-successor} and \eqref{eq:transfinite-limit} are computationally different, and the present semantics turns the difference into a continuity distinction. Successor jump closure extends to a Scott-continuous operation on ideals. Limit uniformization asks whether an entire cofinal family has become available. It therefore depends on infinitary information and fails Scott continuity.

In the language of inductive definitions, a limit gate has an infinitary premise. Aczel's analysis of monotone induction emphasizes precisely the relation between rule structure, transfinite stages, proof-tree ranks, and closure ordinals \cite{Aczel1977}. The present construction suggests the schematic correspondence
\[
\boxed{\text{successor jump}\ \leftrightarrow\ \text{continuous closure}},
\qquad
\boxed{\text{limit jump}\ \leftrightarrow\ \text{discontinuous uniformization}}.
\]

\subsection{Why \texorpdfstring{$\omega^2$}{omega squared} is not a degree invariant}

\Cref{thm:omega-square} does not assert that $\mathbf0^{(\omega)}$ has intrinsic computational complexity $\omega$, that $\mathbf0^{(\omega\cdot2)}$ has intrinsic complexity $\omega\cdot2$, or that a Turing degree determines a unique domain-theoretic closure ordinal.

Closure ordinals are invariants of operators and approximation schemes, not of Turing degrees alone. The ordinal $\omega^2$ appears because the operator has been given a particular dependency architecture:
\begin{enumerate}
\item close under successor jumps through an $\omega$-block;
\item wait for the whole block;
\item uniformly code it;
\item begin a new block;
\item repeat this process $\omega$ many times.
\end{enumerate}
Different operational semantics can attach different closure ordinals to the same eventual degree-theoretic information. This is exactly where the connection to ordinal analysis becomes substantive: ordinals measure the architecture of the inductive process by which information becomes available.

The appropriate question is therefore not ``What ordinal is the degree $\mathbf a$?'' but
\begin{center}
\fbox{\parbox{0.84\linewidth}{\centering What closure ordinal is required by a specified rule system for generating or uniformly organizing degrees up to $\mathbf a$?}}
\end{center}
This is structurally analogous to the use of closure ordinals in theories of inductive definitions.

\subsection{Toward the Church--Kleene boundary}

The hierarchy above stops at $\omega^2$ only to keep the mechanism transparent. Classical hyperarithmetic theory iterates the jump through every recursive ordinal below the Church--Kleene ordinal $\omega_1^{CK}$ \cite{Spector1955,Slaman2005}. At each recursive limit $\lambda$, an effective fundamental sequence gives an increasing chain
\[
\down\mathbf0^{(\lambda[0])}
\subseteq
\down\mathbf0^{(\lambda[1])}
\subseteq\cdots.
\]
Its union need not contain the uniform limit degree $\mathbf0^{(\lambda)}$. The associated limit gate is therefore non-Scott-continuous by the same argument as \cref{thm:uniformization-discontinuous}.

This suggests a general program. Fix an effective ordinal notation system and equip recursive limit notations with uniformization rules. The resulting monotone operator on an appropriate domain of degree information will have a closure ordinal determined by the dependency structure of those limit rules.

Developing this systematically through arbitrary recursive ordinals requires care. Stage indices need not coincide naively with the ordinal labels on jump degrees: application of a discontinuous limit rule occurs only after the directed supremum at its triggering stage. Moreover, notation invariance must be derived from the classical machinery of hyperarithmetic hierarchies rather than assumed.

\begin{problem}
\label{prob:recursive-alpha}
Construct a canonical family of monotone domain operators $\Theta_\alpha$ for recursive ordinals $\alpha$ such that:
\begin{enumerate}
\item the fixed point of $\Theta_\alpha$ contains exactly an intended initial segment of the transfinite jump hierarchy;
\item its closure rank is invariant under the chosen representation of $\alpha$, up to an explicitly characterized ordinal transformation;
\item failures of Scott continuity occur precisely at effective limit-uniformization nodes.
\end{enumerate}
\end{problem}

A satisfactory solution would connect closure ordinals of domain operators directly to the structure of the hyperarithmetic jump hierarchy. At the Church--Kleene boundary, a further transition occurs: there is no recursive notation for $\omega_1^{CK}$ itself. Kleene's $\mathcal O$ and the hyperjump then encode a higher level of effective ordinal information. Whether this transition admits an analogous fixed-point/escape semantics is a natural continuation of the present framework.

\subsection{Closure ordinals versus proof-theoretic ordinals}

The closure ordinal of an inductive operator should not be identified with the proof-theoretic ordinal of a formal theory describing that operator. A closure ordinal measures how long a specified semantic induction takes to stabilize. A proof-theoretic ordinal measures the transfinite induction strength of a formal system. Classical theories of inductive definitions sharply distinguish these two roles \cite{Aczel1977}.

The ordinal-analysis program suggested here therefore has two layers. First, classify operator schemes by semantic closure ordinals
\[
\cl(\Theta).
\]
Second, ask for the proof-theoretic strength of theories capable of proving the existence, invariance, or well-foundedness of the corresponding fixed-point constructions. The present paper addresses the first layer only, and explicitly only through $\omega^2$.

\section{Relation to bounded informational fixed points}

A simpler domain-theoretic halting construction begins with finite observations of a single computation and advances them one step at a time. If $p_n$ records the behavior of a machine through the first $n$ stages, a local extension operator can produce an ascending chain
\[
p_0\sqsubseteq p_1\sqsubseteq p_2\sqsubseteq\cdots
\]
whose Scott supremum is reached at $\omega$. The important feature of such an operator is not a purported lower bound on literal self-simulation time, but its \emph{finite dependency structure}: each newly defined coordinate depends on finite information from the previous approximation.

The jump-ideal construction is the degree-theoretic analogue of this mechanism. The operator $\Gamma$ is Scott-continuous, so its iterative closure is exhausted by an $\omega$-chain. The uniformization gates introduced above mark the first point at which the dependency structure ceases to be finite: a gate requires the entire cofinal chain before it can fire. Consequently, closure ordinal and continuity change together.

This comparison suggests a more general hierarchy of informational operators:
\[
\begin{aligned}
\text{finite witness dependence} &\Longrightarrow \text{ Scott continuity and closure by }\omega,\\[3pt]
\text{cofinal-limit dependence} &\Longrightarrow \text{ non-Scott-continuity and transfinite closure},\\[3pt]
\text{nested effective limit dependence} &\Longrightarrow \text{ higher recursive closure ranks}.
\end{aligned}
\]
The Turing jump supplies a canonical family of computationally meaningful successor operations with which to populate such a hierarchy.

\section{Discussion}

The usual statement that the Turing jump has no fixed point is correct but incomplete as a description of iterated halting phenomena. It applies to individual degrees. Passing to the ideal completion changes the fixed-point structure without changing the underlying impossibility theorem.

The canonical ideal extension
\[
\Gamma(I)=\down\{\mathbf a':\mathbf a\in I\}
\]
is Scott-continuous. Its least fixed point above $\mathbf a$ is
\[
\bigcup_{n<\omega}\down\mathbf a^{(n)}.
\]
For $\mathbf a=\mathbf0$, this is the ideal of arithmetical degrees. There is therefore a precise sense in which the finite jump hierarchy converges to a fixed point. It is not the false equation
\[
\mathbf0^{(\omega)}=(\mathbf0^{(\omega)})'.
\]
It is the true equation
\[
\Gamma(\Arith)=\Arith.
\]

The contrast with the uniform limit oracle is fundamental:
\[
\mathbf0^{(\omega)}\notin\Arith.
\]
The first equation says that the collection is closed under taking relativized halting problems one member at a time. The second says that the collection does not uniformly internalize its entire construction in a single oracle.

Limit uniformization converts the former kind of closure into the latter kind of object. Because doing so requires recognition of completion of an infinite directed chain, it is not Scott-continuous. The new oracle is then subject to the jump again.

The recurrence of the diagonal can thus be pictured as
\[
\text{successor jumps}
\longrightarrow
\text{fixed ideal}
\longrightarrow
\text{uniform limit oracle}
\longrightarrow
\text{new relative halting problem}.
\]
Diagonalization is not eliminated at the limit but relocated. Non-uniform closure can absorb every previous jump operation; uniformly coding that closure creates a new point from which a further diagonal can be taken.

Theorems \ref{thm:least-jump-fixed}, \ref{thm:omega-times-two}, and \ref{thm:omega-square} exhibit the first closure ordinals of this semantics:
\[
\omega,
\qquad
\omega\cdot2,
\qquad
\omega^2.
\]
The broader question is whether effective transfinite jump hierarchies admit a canonical domain-theoretic presentation in which recursive ordinal structure is recovered as the rank of precisely characterized failures of finite information and Scott continuity.

\section*{Acknowledgments}
Generative AI was used in writing this document.

\end{document}